\title{Notes on Chain Recurrence and Lyapunonv Functions}
\author{John Franks}
\newtheorem{theorem}{Theorem}[section]
\newtheorem{defn}[theorem]{Definition}
\newtheorem{exer}[theorem]{Exercise}
\newtheorem{thm}[theorem]{Theorem}
\newtheorem{cor}[theorem]{Corollary} 
\newtheorem{lem}[theorem]{Lemma}
\newtheorem{prop}[theorem]{Proposition}
\newcommand{\eps}{{\varepsilon}}
\newcommand{\R}{{\cal R}}
\newcommand{\B}{{\cal B}}
\newcommand{\rr}{{\mathbb R}}
\newcommand{\N}{\mathcal N}
\begin{document}
\maketitle
\begin{abstract}
This short expository note provides an introduction to the concept of chain recurrence in topological
dynamics and a proof of the existence complete Lyapunov functions for homeomorphisms of
compact metric spaces due to Charles Conley \cite{C}.  
I have used it as supplementary material in  introductory dynamics courses.
\end{abstract}

\section
{Epsilon Chains}

We briefly review the definition of $\eps$-chains and chain
recurrence developed by Charles Conley in \cite{C}.  In the following
$f: X \to X$ will denote a homeomorphism of a compact metric space
$X$.

\begin{defn}  An {\em $\eps$-chain\/} from $x$ to
$y$ for $f$  is a sequence of points in $X$, 
$x = x_0,x_1,\dots,x_n =y$,  with $n \ge 1$, such that 
$$d(f(x_i),x_{i+1}) <\eps \qquad\text{for } 0\le i\le n-1.$$ A point
$x\in X$ is called {\em chain recurrent} if for every $\eps>0$ there
is an $\eps$-chain from $x$ to itself.  The set $\R(f)$ of chain
recurrent points is called the {\it chain recurrent set\/} of $f$.
\end{defn}

\begin{exer}
Let $f: X \to X$ be a homeomorphism of a compact metric space.
\begin{enumerate}
\item The set $\R(f)$ is closed (hence compact) and invariant under $f$.  

\item If $x_0,x_1,\dots,x_n$ is an $\eps$-chain from $x$ to $y$
and $y_0,y_1,\dots,y_m$ is an $\eps$-chain from $y$ to $z$, then
$x_0,x_1,\dots,x_n = y_0, y_1,y_2,\dots,y_m$ is an $\eps$-chain from $x$ to $z$.

\item If for every $\eps > 0$ there is an $\eps$-chain from $x$ to
$y$ for $f$ then for every $\eps > 0$ there is a $\eps$-chain from
$y$ to $x$ for $f^{-1}.$

\end{enumerate}

\end{exer}

Recall that a point $x$ is called {\em recurrent} for $f:X \to X$
if $x$ is a limit point of the sequence $x, f(x), f^2(x), \dots
f^n(x), \dots.$  Clearly any recurrent point is also chain recurrent.
The converse is not true.  

Recall that if $\mu$ is a finite 
Borel measure  on $X$ and $f: X \to X$  is a,
not necessarily invertible, function then
we say $\mu$ is $f-$invariant provided
$\mu(E) = \mu(f^{-1}(E))$ for every measurable
subset $E \subset X.$

If there is a finite $f$-invariant measure
on $X$ then almost every point
of $X$ (in the measure sense) is recurrent.  

\begin{thm}[Poincar\'e Recurrence Theorem]\label{thm: PR}
Suppose $\mu$ is a finite 
Borel measure on $X$ and $f: X \to X$  is a 
measure preserving transformation.
If $E \subset X$ is measurable and $\N$ is the subset of 
$E$ given by
\[
\N = \{ x \in E \ | \ f^k(x) \in E \text{ for at most
finitely many } k \ge 1\},
\]
then $\N$ is measurable and $\mu(\N) = 0.$
\end{thm}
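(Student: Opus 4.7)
The plan is to write $\N$ as a countable union of simpler sets and then show each has measure zero by a pigeonhole argument that exploits the finiteness of $\mu$.

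First I would set, for each integer $n \ge 1$,
\[
A_n = \{x \in E \ | \ f^k(x) \notin E \text{ for all } k \ge n\} = E \cap \bigcap_{k \ge n} f^{-k}(X \setminus E),
\]
which is measurable. A point $x \in E$ belongs to $\N$ exactly when its forward orbit visits $E$ at only finitely many times, and by taking $n$ to be one larger than the last such time we see $\N = \bigcup_{n \ge 1} A_n$. It therefore suffices to show $\mu(A_n) = 0$ for every $n$.

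The key step is the disjointness claim that $A_n, f^{-n}(A_n), f^{-2n}(A_n), \dots$ are pairwise disjoint. If $x \in f^{-jn}(A_n) \cap f^{-kn}(A_n)$ with $0 \le j < k$, set $y = f^{jn}(x) \in A_n$; then $f^{(k-j)n}(y) = f^{kn}(x) \in A_n \subset E$, while the defining property of $A_n$ together with $(k-j)n \ge n$ force $f^{(k-j)n}(y) \notin E$, a contradiction. Since $f$ preserves $\mu$, each $f^{-jn}(A_n)$ has the same measure as $A_n$, and their disjoint union sits inside $X$; hence $\sum_{j \ge 0} \mu(A_n) \le \mu(X) < \infty$, which forces $\mu(A_n) = 0$.

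The main obstacle, such as it is, is choosing the correct spacing in the disjointness argument: one must iterate by $n$ steps at a time, not by $1$, because $A_n$ only guarantees non-return after $n$ iterations. Everything else is bookkeeping --- measurability of $\N$ follows from its description as a countable union of countable intersections of preimages, and the hypothesis $\mu(f^{-1}B) = \mu(B)$ is iterated to give $\mu(f^{-m}B) = \mu(B)$ for all $m \ge 1$.
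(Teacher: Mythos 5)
Your proof is correct, but it follows a different route from the one in the paper. You decompose $\N$ as the increasing union of the sets $A_n$ of points of $E$ that never return to $E$ from time $n$ onward, and you kill each $A_n$ by the classical pigeonhole argument: the sets $A_n, f^{-n}(A_n), f^{-2n}(A_n),\dots$ are pairwise disjoint and all have measure $\mu(A_n)$, which is incompatible with $\mu(X)<\infty$ unless $\mu(A_n)=0$. Your attention to the spacing is well placed --- iterating by $n$ rather than by $1$ is exactly what makes the disjointness go through for $A_n$, and your verification of it is sound. The paper instead avoids any decomposition: it sets $E_N=\bigcup_{n\ge N}f^{-n}(E)$, observes that $E_{N+1}=f^{-1}(E_N)$ so all the $E_N$ have equal measure, and that they decrease to the set $F$ of points whose orbit meets $E$ infinitely often; continuity of a finite measure from above then gives $\mu(F)=\mu(E_0)$, hence $\mu(E_0\setminus F)=0$ and $\N=E\setminus F\subset E_0\setminus F$ is null. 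Both arguments use the finiteness of $\mu$ in an essential way, but at different points: the paper needs it to pass to the limit along the nested sequence $E_N$, while you need it to bound the sum $\sum_j\mu(f^{-jn}(A_n))$. The paper's version is shorter and handles all of $\N$ in one stroke; yours is the more traditional Poincar\'e argument, relies only on countable additivity applied to a disjoint family, and makes very explicit where a set of positive measure would force infinite total measure. Either proof is acceptable.
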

\begin{proof}
Define
\[
E_N = \bigcup_{n=N}^\infty f^{-n}(E) \text{ and }
F = \bigcap_{n=0}^\infty E_n.
\]
Then $F$ is the set of points whose forward orbit hits $E$
infinitely often so $\N= E\setminus F$ and $\N$ is measurable.
Since $E_{n +1} = f^{-1}(E_n)$
we have $\mu(E_{n +1}) = \mu(E_n)$ for all $n \ge 0.$
Since $E_0 \supset E_1 \supset E_2 \dots$ we have
\[
\mu(F) = \mu( \bigcap_{n=0}^\infty E_n)
= \lim_{n \to\infty}\mu(E_n) = \mu(E_0).
\]
Hence $\mu(E_0 \setminus F) = \mu(E_0) - \mu( F) =0.$ 
Since $E \setminus F \subset E_0 \setminus F$
we conclude
$\mu(\N) = \mu(E \setminus F) = 0.$
\end{proof}

\begin{cor}
Suppose $\mu$ is a probability measure on $X$.
If $\mu$ is $f$-invariant then the set $\N$ of 
points which are not recurrent has measure $\mu(\N) = 0.$
\end{cor}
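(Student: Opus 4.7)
The plan is to reduce the corollary to the Poincaré Recurrence Theorem applied to each element of a countable basis for the topology of $X$, and then invoke countable subadditivity.

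First I would exploit the fact that a compact metric space is second countable, so there is a countable basis $\{U_j\}_{j\in\mathbb{N}}$ for the topology of $X$ (for instance, the collection of open balls of rational radius centered at the points of a countable dense subset). For each fixed $j$, apply Theorem \ref{thm: PR} with $E = U_j$ to obtain a measurable set
\[
\N_j = \{x \in U_j \mid f^k(x) \in U_j \text{ for only finitely many } k \ge 1\}
\]
with $\mu(\N_j) = 0$.

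Next I would establish the key set-theoretic identity $\N = \bigcup_j \N_j$. For the inclusion $\N \subset \bigcup_j \N_j$, suppose $x$ is not recurrent. Then $x$ is not a limit point of its forward orbit, so there is an open neighborhood $V$ of $x$ containing $f^k(x)$ for only finitely many $k \ge 1$. Choose a basis element $U_j$ with $x \in U_j \subset V$; then $x \in \N_j$. Conversely, any $x \in \N_j$ has a neighborhood $U_j$ visited only finitely often by its forward orbit, so $x$ is not recurrent. This shows in particular that $\N$ is measurable as a countable union of measurable sets, and by countable subadditivity
\[
\mu(\N) \le \sum_{j=1}^\infty \mu(\N_j) = 0.
\]

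The only step that requires any care is the choice of basis: one needs a countable collection of open sets fine enough to detect recurrence, which is exactly what second countability provides. Everything else is a direct invocation of Theorem \ref{thm: PR} and the countable additivity of $\mu$.
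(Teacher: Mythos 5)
Your argument is correct and follows the same overall strategy as the paper's: decompose the set of non-recurrent points into countably many pieces, show each piece is null via Theorem~\ref{thm: PR}, and conclude by countable subadditivity. The decomposition itself is genuinely different, though. The paper stratifies by the size of the recurrence gap, setting $\N_n=\{x\in X \ : \ d(x,f^k(x))>1/n \text{ for all } k>0\}$, and for each $n$ covers the compact space $X$ by finitely many balls of radius $1/2n$, applying Theorem~\ref{thm: PR} to each such ball to get $\mu(\N_n)=0$; then $\N=\bigcup_n \N_n$. You instead apply the theorem once to each element of a countable basis, which bypasses the metric estimate and the finite subcover entirely: the only topological input you use is second countability (supplied here by compactness plus metrizability, but valid more generally). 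The small point your route must check --- and you do check it --- is the identity $\N=\bigcup_j\N_j$ against the definition of recurrence: $x$ fails to be recurrent exactly when some neighborhood of $x$, hence some basis element containing $x$, is visited only finitely often by the forward orbit, and conversely membership in some $\N_j$ obstructs recurrence. Both proofs are complete; yours is marginally more economical, while the paper's is more concrete about where compactness enters.
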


\begin{proof}
Let $\N_n$ denote the set of points $x \in X$ such that
$d(x, f^k(x)) > 1/n$ for all $k>0.$  We wish first to show
$\mu(\N_n) = 0$ for all $n> 0.$  

To do this suppose $B$ is an open ball in the metric space $X$ of
radius $1/2n$ so the distance between any two points of $B$ is less
than $1/n.$ We conclude from Theorem~(\ref{thm: PR}) that $\mu(B \cap
\N_n) = 0.$ But since $X$ is compact it can be covered by finitely
many balls $B$ of radius $1/2n$ so we conclude $\mu(\N_n) = 0.$
Since 
\[
\N = \bigcup_{n=1}^\infty \N_n
\]
we conclude $\mu(\N) = 0.$
\end{proof}

We have the following immediate corollary.

\begin{cor}
Suppose $f:X \to X$ preserves a finite Borel measure $\mu$
and $\mu(U) > 0$ for every non-empty open set $U$. Then there
are recurrent points in every such $U$.  I.e.
the recurrent points are dense in $X$.
\end{cor}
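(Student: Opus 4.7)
The plan is to deduce this directly from the previous corollary, which gives $\mu(\mathcal N) = 0$ where $\mathcal N$ denotes the set of non-recurrent points. The hypothesis that $\mu$ assigns positive measure to every non-empty open set is exactly what is needed to bridge ``almost everywhere'' in the measure sense to ``dense'' in the topological sense.

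More precisely, I would fix a non-empty open set $U \subset X$ and observe that by the previous corollary the set $\mathcal N$ of non-recurrent points satisfies $\mu(\mathcal N)=0$. Then $\mu(U \cap \mathcal N) \le \mu(\mathcal N) = 0$, while by hypothesis $\mu(U) > 0$. Hence $U \setminus \mathcal N$ cannot be empty (otherwise $U \subset \mathcal N$ and $\mu(U) = 0$, a contradiction), so $U$ contains a recurrent point. Since $U$ was an arbitrary non-empty open set, the recurrent points form a dense subset of $X$.

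There is really no obstacle here; the only thing to watch is that the statement requires $\mu$ to be a probability measure (or at least finite) in order to apply the preceding corollary. The hypothesis ``finite Borel measure'' covers this. Normalizing $\mu$ to a probability measure if desired does not affect which sets have measure zero, so the appeal to the previous corollary is immediate.
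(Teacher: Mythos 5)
Your proof is correct and is exactly the argument the paper has in mind: the paper states this as an "immediate corollary" of the preceding one and gives no further details, and your filling-in (a null set cannot contain a set of positive measure, plus the harmless normalization of the finite measure to a probability measure) is precisely the intended reasoning.
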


It is easy to see
that in this case the chain recurrent set $\R(f)$ is all
of $X$.  Also, as we now show, in this circumstance if $X$ is
connected, then for any points
$x,y \in X$ there is an $\eps$-chain  from $x$ to $y$.

\begin{prop} Suppose $\mu$ is an
$f$-invariant measure on $X$ satisfying $\mu(X) =1$ and $\mu(U) > 0$
for every non-empty open set $U \subset X$ and suppose that $X$ is
connected.  Then for any $x,y \in X$ and any $\eps > 0,$ there is
an $\eps$-chain from $x$ to $y$.
\end{prop}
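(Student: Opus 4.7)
The plan is to combine two ingredients: the fact, asserted in the paragraph immediately preceding the proposition, that under these hypotheses $\R(f) = X$, and the connectedness of $X$. The former supplies, near each point, a local ``round-trip'' $\eps$-chain; the latter lets us splice these round-trips together into a single chain from $x$ to $y$.

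First, I would use connectedness to produce a coarse traverse from $x$ to $y$. Declare $a \sim b$ iff there is a finite sequence $a = p_0, p_1, \dots, p_n = b$ with $d(p_i, p_{i+1}) < \eps/3$ for every $i$. This is an equivalence relation whose classes are open, since any point within distance $\eps/3$ of $b$ may be appended to a sequence ending at $b$. Thus $X$ is partitioned into open equivalence classes, and because $X$ is connected there is only one, so some such sequence $x = p_0, p_1, \dots, p_n = y$ exists.

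Next I would convert this coarse traverse into an actual $\eps$-chain. Each $p_i$ lies in $\R(f) = X$, so there is an $\eps/3$-chain
\[
p_i = q_0^{(i)}, q_1^{(i)}, \dots, q_{m_i}^{(i)} = p_i
\]
from $p_i$ back to itself. Replacing the final term by $p_{i+1}$ yields a genuine $\eps$-chain from $p_i$ to $p_{i+1}$, since
\[
d\bigl(f(q_{m_i-1}^{(i)}),\, p_{i+1}\bigr) \le d\bigl(f(q_{m_i-1}^{(i)}),\, p_i\bigr) + d(p_i, p_{i+1}) < \tfrac{\eps}{3} + \tfrac{\eps}{3} < \eps,
\]
while the earlier steps already have error less than $\eps/3$. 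Concatenating these $n$ chains via part (2) of the exercise above produces the desired $\eps$-chain from $x$ to $y$.

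The only delicate step is the first, and it rests only on the openness of the coarse-chain equivalence classes; everything else is routine triangle-inequality accounting. In case one prefers to derive $\R(f) = X$ on the fly, the same idea works: pick a recurrent $r$ extremely close to $p$ (available by the density corollary) together with a return time $k$ for which $d(f^k(r), r)$ is small, and form the loop $p, f(r), f^2(r), \dots, f^{k-1}(r), p$, where uniform continuity of $f$ controls the first and last jumps.
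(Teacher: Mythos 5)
Your argument is correct, and it reaches the conclusion by a recognizably different decomposition than the paper's. Conley's proof (as given here) applies the connectedness argument directly to the dynamical relation: $a\sim_\eps b$ iff there are $\eps$-chains both ways, with symmetry and transitivity free, openness of classes immediate from the definition of $\eps$-chain, and reflexivity established by a measure pigeonhole ($f^n(U)\cap U\ne\emptyset$ for some $n>0$, else the disjoint iterates of $U$ force $\mu(X)=\infty$). You instead split the work in two: a purely metric step (connected compact metric spaces are $\eps/3$-chainable as point sequences, via the same open-equivalence-class trick applied to a relation with no dynamics in it) and a purely dynamical step (each waypoint $p_i$ admits an $\eps/3$-loop because $\R(f)=X$, and rerouting the last jump of that loop to $p_{i+1}$ costs at most an extra $\eps/3$ by the triangle inequality). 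Your triangle-inequality bookkeeping and the concatenation via part (2) of the exercise are both fine, and the degenerate case $x=y$ is covered directly by chain recurrence of $x$. The one thing to be aware of is that you lean on the assertion $\R(f)=X$, which the paper states as ``easy to see'' just before the proposition but does not prove; your closing remark correctly indicates how to manufacture the needed loops directly from the density of recurrent points (Poincar\'e recurrence) plus uniform continuity of $f$, which closes that loop and is essentially the same pigeonhole the paper uses for reflexivity. A small payoff of your organization is that it isolates what each hypothesis does: the measure is used only to force $\R(f)=X$, and the splicing argument then proves the more general statement that on a connected space with $\R(f)=X$ any two points are joined by $\eps$-chains for every $\eps$.
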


\begin{proof}  Fix a value of $\eps >0.$  
We construct an equivalence relation $\sim_\eps$ on the space $X$ as
follows.  Let $x \sim_\eps y$ provided there is an $\eps$-chain from
$x$ to $y$ and one from $y$ to $x.$ This clearly defines a symmetric
and transitive relation.  It is reflexive as well, however.  To see
this, let $U$ be a neighborhood of $x$ such that
$U$ and $f(U)$ have diameter less than $\eps.$ Clearly if $f^n(U)
\cap U \ne \emptyset$ for some $n>0$ then there is an $\eps$-chain
from $x$ to $x$.  In fact, if $x_0,f^n(x_0)\in U,$ we can define $x_1 =
x,\ x_i = f^i(x_0), \ 1 < i < n$ and the only ``jumps'' needed are
from $f(x)$ to $x_2 = f(x_0)$ and from $f(x_{n-1}) = f^n(x_0)$ to $x_n
= x.$ 

But for any open $U$ it must be the case that
$f^n(U) \cap U \ne \emptyset$ for some $n >0$
since otherwise the sets $f^i(U)$ are pairwise disjoint and all have
the same positive measure which would mean the measure of $X$ is
infinite.  Hence the relation $\sim_\eps$ is reflexive and thus an
equivalence relation.

From the definition of $\eps$-chain it is immediate
that the equivalence classes are open sets in $X$.  Since the
equivalence classes form a partition of $X$ into pairwise disjoint
open sets and $X$ is connected, there must be a single equivalence
class.  Thus for any $x,y \in X$ there is an $\eps$-chain for $X$
from $x$ to $y$.  Since $\eps$ was arbitrary the result follows.
\end{proof}

\section{The ``Fundamental Theorem of Dynamical Systems''}

In this section we briefly review the elementary theory of
attractor-repeller pairs and complete Lyapunov functions developed by
Charles Conley in \cite{C}.  We give Conley's proof of the the
existence of complete Lyapunov functions, (which is sometimes called
the ``Fundamental Theorem of Dynamical Systems'')

If $A\subset X$ is a compact subset and there is an open neighborhood $U$ of 
$A$ such that $f(cl(U))\subset U$ and $\bigcap_{n\ge0}f^n(cl(U))=A$, then $A$ 
is called an {\it attractor\/} and $U$ is an isolating neighborhood. It is 
easy to see that if $V=X \setminus cl(U)$ and $A^*=\bigcap_{n\ge0}f^{-n} (cl(V))$, then 
$A^*$ is an attractor for $f^{-1}$ with isolating neighborhood $V$. The set 
$A^*$ is called the {\it repeller\/} dual to $A$. It is clear that $A^*$ is 
independent of the choice of isolating neighborhood $U$ for $A$. Obviously 
$f(A)=A$ and $f(A^*)=A^*$.

\begin{lem}\label{2.1} The set of attractors for $f$ is countable. 
\end{lem}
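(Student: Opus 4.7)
The plan is to exploit the fact that $X$, being a compact metric space, is separable and hence admits a countable basis, and to show that every attractor can be recovered as an intersection $\bigcap_{n \ge 0} f^n(cl(W))$ for some $W$ drawn from a fixed countable family $\mathcal{F}$ of open sets. This exhibits the set of attractors as the image of a countable set, establishing countability. I would take $\mathcal{F}$ to be the collection of all finite unions of basic open sets from a chosen countable basis; this family is itself countable.

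The key intermediate step is a shrinking lemma: whenever $A \subset X$ is compact and $U \subset X$ is open with $A \subset U$, there exists $W \in \mathcal{F}$ such that $A \subset W$ and $cl(W) \subset U$. I would prove this by working with the basis of open balls $B(q, 1/n)$, where $q$ ranges over a countable dense subset and $n \ge 1$. Setting $\delta = \tfrac{1}{3} d(A, X \setminus U) > 0$ (positive because $A$ and $X\setminus U$ are disjoint compacta), each $x \in A$ lies in some basic ball of radius $1/n < \delta$ centered at a dense point within $1/n$ of $x$. Compactness of $A$ reduces the resulting cover to a finite subcover whose union $W$ lies in $\mathcal{F}$, and since each selected ball sits within the $2\delta$-neighborhood of $A$, the finite union $cl(W)$ lies in the $3\delta$-neighborhood of $A$, hence inside $U$.

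With the lemma available, given an attractor $A$ with isolating neighborhood $U$, I would apply it to produce $W \in \mathcal{F}$ sandwiched as $A \subset W \subset cl(W) \subset U$, and then verify $A = \bigcap_{n \ge 0} f^n(cl(W))$. One inclusion is immediate: $cl(W) \subset cl(U)$ gives $f^n(cl(W)) \subset f^n(cl(U))$ for each $n \ge 0$, and intersecting yields a subset of $A$ by the definition of isolating neighborhood. For the reverse inclusion, $f(A)=A$ iterates to $A = f^n(A)$ for all $n \ge 0$; combined with $A \subset cl(W)$ this gives $A \subset f^n(cl(W))$ for every $n$. Thus the map $W \mapsto \bigcap_{n \ge 0} f^n(cl(W))$ from $\mathcal{F}$ into the subsets of $X$ has image containing every attractor, and countability follows.

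The main obstacle, and really the only nontrivial step, is the shrinking lemma: from the uncountably many possible isolating neighborhoods we must select a representative drawn from a fixed countable list while preserving the intersection $\bigcap_n f^n(cl(\cdot))$ exactly. The sandwich $A \subset W \subset cl(W) \subset U$ is precisely what achieves this, and it is worth noting that $W$ itself need not satisfy the trapping condition $f(cl(W)) \subset W$ — only the intersection of forward iterates of its closure matters, which is why the argument does not require us to re-establish $W$ as an isolating neighborhood in its own right.
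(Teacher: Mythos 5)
Your proposal is correct and takes essentially the same route as the paper: pick a countable basis, use compactness of $A$ to extract a finite union $W$ of basic sets with $A\subset W\subset U$, recover $A$ as the intersection of forward iterates of $W$, and count finite subsets of the basis. The paper avoids your shrinking lemma entirely by intersecting the iterates $f^n(W)$ of the \emph{open} set $W$ rather than of $cl(W)$, for which the sandwich $A\subset W\subset U$ already suffices since $\bigcap_{n\ge0}f^n(U)=\bigcap_{n\ge0}f^n(cl(U))=A$.
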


\begin{proof} Choose a countable basis $\B=\{V_n\}^\infty_{n=1}$ for the topology 
of $X$. If $A$ is an attractor with open isolating neighborhood $U$, then $U$ 
is a union of sets in $\B$. Hence, since $A$ is compact, there are 
$V_{i_1},\dots,V_{i_k}$ such that $A\subset V_{i_1}\cup\cdots\cup V_{i_k}\subset 
U$. Clearly $A=\bigcap_{n\ge0} f^n(U)=\bigcap_{n\ge0} f^n(V_{i_1}\cup 
\cdots\cup V_{i_k})$. Consequently there are at most as many attractors as 
finite subsets of $\B$, i.e., the set of attractors is countable.
\end{proof}

\begin{lem}\label{lem: dual}
If $\{A_n\}^\infty_{n=1}$ are the attractors of $f$ and $\{A^*_n\}$
their dual repellers, then the chain recurrent set
$\R(f)=\bigcap^\infty_{n=1} (A_n\cup A^*_n)$.
\end{lem}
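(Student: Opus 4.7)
The plan is to prove the two inclusions separately, both relying on the following trapping observation: for any attractor $A$ with isolating neighborhood $U$, the compact set $f(cl(U))$ is contained in the open set $U$, so there is $\eps_0 > 0$ such that every $\eps$-chain with $\eps < \eps_0$ that ever enters $cl(U)$ remains in $cl(U)$ for all later steps.

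For $\R(f) \subseteq \bigcap_n (A_n \cup A_n^*)$, I argue contrapositively. Fix an attractor $A$ with isolating neighborhood $U$ and suppose $x \notin A \cup A^*$; I will produce an $\eps > 0$ with no $\eps$-chain from $x$ to $x$. Since $A = \bigcap_N f^N(cl(U))$, I may shrink $U$ to $f^N(U)$ for large $N$ — this is still an isolating neighborhood for $A$ because $f(cl(f^N(U))) \subseteq f^N(U)$ — so I may assume $x \notin cl(U)$. Since $A^*$ is independent of the choice of isolating neighborhood, the condition $x \notin A^*$ says some $f^k(x)$ lies in $\Int(cl(U))$, and then $f^{k+1}(x) \in U$. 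For $\eps$ small compared with the modulus of continuity of $f^{k+1}$, any $\eps$-chain from $x$ has its $(k+1)$st entry in $U$, and then the trapping observation pins all later entries in $cl(U)$. Concatenating a hypothetical $\eps$-chain from $x$ to $x$ with itself enough times to exceed length $k+1$, the final entry (which is $x$) would lie in $cl(U)$, contradicting $x \notin cl(U)$.

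For the reverse inclusion, suppose $x \notin \R(f)$, fix $\eps > 0$ for which no $\eps$-chain from $x$ to $x$ exists, and define
\[
W(\delta) = \{\, y \in X : \text{there is a } \delta\text{-chain from } x \text{ to } y\,\}, \qquad U = W(\eps/2).
\]
Wiggling the terminal point shows $W(\delta)$ is open, and appending the trivial step $y \mapsto f(y)$ shows $f(W(\delta)) \subseteq W(\delta)$. I verify $f(cl(U)) \subseteq U$ by approximating $y \in cl(U)$ by $y_k \in U$, extending the chain for $y_k$ by $f(y)$, and absorbing the extra jump $d(f(y_k), f(y))$ into the $\eps/2$ budget using uniform continuity of $f$. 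The critical point is that $x \notin cl(U)$: were $x$ a limit of $y_k \in W(\eps/2)$, replacing the final entry of a chain to $y_k$ by $x$ would contribute a final jump of at most $\eps/2 + d(y_k, x) < \eps$, giving an $\eps$-chain from $x$ to $x$. Thus $U$ is an isolating neighborhood for the attractor $A = \bigcap_n f^n(cl(U))$ with $x \notin A$, and the trivial one-step chain $x, f(x)$ places $f(x) \in U \subseteq X \setminus cl(V)$, which shows $x \notin A^*$.

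The main obstacle is the calibration of chain thresholds in the second direction: the attractor axiom demands the strict open inclusion $f(cl(U)) \subseteq U$, whereas the naive candidate $W(\eps)$ satisfies only the closed inclusion $f(cl(W(\eps))) \subseteq cl(W(\eps))$. The trick of working at the two scales $\eps/2$ and $\eps$ is precisely what creates the slack to upgrade closed inclusions to open ones, and the same two-scale device — together with the hypothesis that no $\eps$-chain from $x$ to itself exists — is what separates $x$ from $cl(U)$. Once these two verifications are in hand the rest of the argument is routine.
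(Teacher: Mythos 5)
Your proof is correct and follows essentially the same route as the paper's: for the forward inclusion, normalize the isolating neighborhood so that $x$ lies outside it and trap every sufficiently fine chain from $x$ inside $cl(U)$; for the reverse inclusion, build an attractor from the set $\Omega(x,\eps)$ of $\eps$-chain-reachable points and check $x$ lies in neither it nor its dual repeller. The only quibble is your closing remark: the single-scale candidate $W(\eps)$ already satisfies the open inclusion $f(cl(W(\eps)))\subset W(\eps)$, because the appended jump $d(f(y_k),f(y))$ can be made strictly less than $\eps$ by uniform continuity; the two-scale device is a convenience for placing $x$ outside $cl(U)$, but even that is avoidable since $A\subset W(\eps)$ while $x\notin W(\eps)$ by hypothesis.
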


\begin{proof} We first show $\R(f) \subset\cap(A_n \cup A^*_n)$.
This is equivalent to showing that if $x\notin A\cup A^*$ for some
attractor $A$, then $x\notin \R(f)$. If $U$ is an open isolating
neighborhood of $A$ and $x\notin A\cup A^*$, then $x\in f^{-n}(U)$ for
some $n$. Let $m$ be the smallest such $n$.  Replacing $U$ with
$f^{-m}(U)$ we can assume $x\in U \setminus f(U)$.  Now choose
$\eps_0>0$ so that any $\eps_0$-chain $x=x_1,x_2,x_3$ must
have $x_3\in f^2(U)$. If $\eps_1=d(X \setminus f(U), cl(f^2 (U)))$
and $\eps=\frac12\,\min\{\eps_0,\eps_1\}$, then no
$\eps$-chain can start and end at $x$, since no $\eps$-chain
from a point of $f^2(U)$ can reach a point of $X \setminus f(U)$. Thus
$x\notin \R(f)$. We have shown $\R(f)\subset\cap (A_n\cup A^*_n)$.

We next show the reverse inclusion. Suppose $x\in\bigcap^\infty_{n=1} (A_n\cup 
A^*_n)$. If $x$ is not in $\R(f)$, there is an $\eps_0>0$ such that no 
$\eps_0$-chain from $x$ to itself exists. Let $\Omega(x,\eps)$ denote the 
set of $y\in X$ such that there is an $\eps$-chain from $x$ to $y$. By 
definition, the set $V=\Omega(x,\eps_0)$ is open. Moreover, $f(cl(V))\subset 
V$, because if $z\in cl(V)$, there is $z_0\in V$ such that 
$d(f(z),f(z_0))<\eps_0$ and consequently an $\eps_0$-chain from $x$ to 
$z_0$, gives an $\eps_0$-chain $x=x_0,x_1,\dots,x_k,z_0,f(z)$ from $x$ to 
$f(z)$. Hence $A=\bigcap_{n\ge0} f^n(cl(V))$ is an attractor with isolating 
neighborhood $V$. By assumption either $x\in A$ or $x\in A^*$. Since there is 
no $\eps_0$-chain from $x$ to $x$, $x\notin A$. On the other hand, if 
$\omega(x)$ denotes the limit points of $\{f^n(x)\bigm| n\ge0\}$, then clearly 
$\omega(x)\subset V$, but this is not possible if $x\in A^*$ since $A^*$ is closed 
and $x\in A^*$ would imply $\omega(x)\subset A^*$. Thus we have contradicted 
the assumption that $x\notin\R$.
\end{proof}

\begin{exer}
Let $f= id : X \to X$ be the identity homeomorphism of a compact metric space.
Find all attractors of $f$ and their dual repellers.
\end{exer}

If we define a relation $\sim$ on $\R$ by $x\sim y$ if for every $\eps>0$ 
there is an $\eps$-chain from $x$ to $y$ and another from $y$ to $x$, then 
it is clear that $\sim$ is an equivalence relation. 

\begin{defn} The equivalence classes in $\R(f)$ for the 
equivalence relation $\sim$ above are called the {\em chain transitive 
components\/} of $\R(f)$.
\end{defn}

\begin{prop}\label{2.4}
If $x,y\in\R(f)$, then $x$ and $y$ are in the same 
chain transitive component if and only if there is no attractor $A$ with $x\in 
A$, $y\in A^*$ or with $y\in A$, $x\in A^*$. 
\end{prop}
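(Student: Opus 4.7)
The statement has two directions; I would prove the forward implication (same component $\Rightarrow$ no separating attractor) by contrapositive and the reverse implication (no separating attractor $\Rightarrow$ same component) also by contrapositive, so in both cases I start from an assumed attractor-repeller obstruction or failed chain respectively and reach a contradiction.

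For the forward direction, suppose there is an attractor $A$ with $x\in A$ and $y\in A^*$ (the reversed case is symmetric). Pick an open isolating neighborhood $U$ of $A$, so that $f(cl(U))\subset U$ and $A\subset cl(U)$. Since $A^*\subset X\setminus U$, the point $y$ lies outside $U$. The set $f(cl(U))$ is compact and contained in the open set $U$, so $\eps:=d(f(cl(U)),X\setminus U)>0$. A routine induction then shows that any $\eps$-chain starting in $cl(U)$ must remain in $U$: if $x_i\in cl(U)$ then $f(x_i)\in f(cl(U))$, and $d(f(x_i),x_{i+1})<\eps$ forces $x_{i+1}\in U$. Hence no $\eps$-chain from $x\in A$ can reach $y\notin U$, contradicting $x\sim y$.

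For the reverse direction, suppose $x\not\sim y$. Without loss of generality there exists $\eps_0>0$ such that no $\eps_0$-chain from $x$ to $y$ exists (otherwise swap $x$ and $y$ and swap $A$ with $A^*$ at the end). Following the construction used in Lemma \ref{lem: dual}, set $V=\Omega(x,\eps_0)$; this is open, and the same triangle-inequality argument shows $f(cl(V))\subset V$, so $A:=\bigcap_{n\ge0}f^n(cl(V))$ is an attractor with isolating neighborhood $V$. Since $x\in\R(f)$, some $\eps_0$-chain runs from $x$ to $x$, so $x\in V$; iterating $f(cl(V))\subset V$ gives $f^n(x)\in V$ for all $n\ge 0$, whence $\omega(x)\subset\bigcap_n f^n(cl(V))=A$. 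As $A\cap A^*=\emptyset$ and $\omega(x)$ is nonempty by compactness of $X$, the alternative $x\in A^*$ (which would force $\omega(x)\subset A^*$) is impossible, so $x\in A$.

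It remains to locate $y$. Since $y\in\R(f)$, Lemma \ref{lem: dual} gives $y\in A\cup A^*$. But $A\subset V=\Omega(x,\eps_0)$, so $y\in A$ would yield an $\eps_0$-chain from $x$ to $y$, contrary to our hypothesis. Therefore $y\in A^*$, and $A$ is the desired separating attractor. The main obstacle, and the place where care is required, is the reverse direction: one has to recognize that $V=\Omega(x,\eps_0)$ is the right candidate (using chain recurrence of $x$ to ensure $x\in V$), verify that $A\subset V$ so that $y\notin V$ forces $y\notin A$, and combine this with $y\in A\cup A^*$ from Lemma \ref{lem: dual}. The forward direction is essentially bookkeeping with the trapping inequality $f(cl(U))\subset U$.
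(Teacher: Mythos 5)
Your proof is correct and follows essentially the same route as the paper: the forward direction rests on the trapping estimate $\eps = d(f(cl(U)), X\setminus U)$ showing no $\eps$-chain leaves $U$, and the reverse direction uses the attractor built from $V=\Omega(x,\eps_0)$ exactly as in Lemma~\ref{lem: dual}. The only cosmetic differences are that you argue both directions by contrapositive (the paper argues the converse directly, one $\eps$ at a time) and you place $x$ in $A$ via $\omega(x)\subset A$ rather than simply noting that $x\in V$ rules out $x\in A^*$.
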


\begin{proof} 
Suppose first that $x$ and $y$ are in the same chain transitive
component, i.e., $x\sim y$, and $x\in A$. If $U$ is an open isolating
neighborhood for $A$, let $\eps=\text{ dist}(X \setminus U,
cl(f(U)))$. There can be no $\eps/2$-chain from a point in $f(U)$
to a point in $X \setminus U$, hence none from a point in $A$ to a
point in $A^*$. By Lemma~(\ref{lem: dual}) we know $y\in A\cup A^*$, but $x\sim
y$ implies $y\notin A^*$, so $y\in A$. This proves one direction of
our result.

To show the converse, suppose that for every attractor $A$, $x\in A$ if and only if $y \in 
A$ (and hence $x\in A^*$ if and only if $y\in A^*$). Given $\eps>0$ let 
$V=\Omega(x,\eps)=$ the set of all points $z$ in $X$ for which there is an 
$\eps$-chain from $x$ to $z$. Since $x$ is chain recurrent $x\in V$.Also as 
in the proof of Lemma~(\ref{lem: dual}) $V$ is an isolating neighborhood for an attractor $A_0$. 
Since $x\in A_0\cup A^*_0$ and $x\in V$ we have $x\in A_0$. Thus $y\in 
A_0\subset V$ so there is an $\eps$-chain from $x$ to $y$. A similar 
argument shows there is an $\eps$-chain from $y$ to $x$ so $x\sim y$. 
\end{proof}

We are now prepared to present Conley's proof of the existence of a complete 
Lyapunov function.

\begin{defn} A complete Lyapunov function for $f: X\to X$ is a 
continuous function $g: X \to \rr$ satisfying:
\begin{enumerate}
\item If $x\notin \R(f)$, then $g(f(x))<g(x)$

\item If $x,y\in\R(f)$, then $g(x)=g(y)$ if and only if $x\sim y$ (i.e., $x$ and 
$y$ are in the same chain transitive component.

\item $g(\R(f))$ is a compact nowhere dense subset of $\rr$.
\end{enumerate}
By analogy with the smooth setting, elements of $g(\R(f))$ are called {\em
critical values} of $g$. 
\end{defn}

\begin{lem}\label{2.6}
There is a continuous function $g: X\to[0,1]$ such that 
$g^{-1}(0)=A$, $g^{-1}(1)=A^*$ and $g$ is strictly decreasing on orbits of 
points in $X \setminus (A\cup A^*)$. 
\end{lem}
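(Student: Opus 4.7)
The plan is to assemble $g$ as a weighted bilateral series of Urysohn-type level functions adapted to the iterates of an isolating neighborhood of $A$.

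First I would set up the nested neighborhood structure. Let $U$ be an open isolating neighborhood of $A$, so $f(cl(U))\subset U$, and set $U_n = f^n(U)$ for $n\in\Z$. The trapping property gives $cl(U_n)\subset U_{n-1}$, and from the definitions of $A$ and $A^*$ one immediately has $\bigcap_{n\in\Z}U_n = A$ and $\bigcup_{n\in\Z}U_n = X\setminus A^*$.

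Next, using the explicit distance-ratio form of Urysohn's lemma, I would build a continuous $\ell_0:X\to[0,1]$ with $\ell_0^{-1}(0)=cl(U_0)$ and $\ell_0^{-1}(1)=X\setminus U_{-1}$, then define $\ell_n(x):=\ell_0(f^{-n}(x))$. Each $\ell_n$ is then continuous with $\ell_n^{-1}(0)=cl(U_n)$ and $\ell_n^{-1}(1)=X\setminus U_{n-1}$, and satisfies the shift identity $\ell_n\circ f = \ell_{n-1}$. I would then set
$$g(x) = \frac{1}{3}\sum_{n\in\Z} 2^{-|n|}\ell_n(x).$$
Since $0\le\ell_n\le 1$ and $\sum 2^{-|n|}=3$, the series converges uniformly, so $g$ is continuous with values in $[0,1]$. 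The equalities $g^{-1}(0)=A$ and $g^{-1}(1)=A^*$ then follow from the descriptions $A=\bigcap cl(U_n)$ and $A^*=\bigcap(X\setminus U_{n-1})$ together with the strictness of the Urysohn construction.

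For the strict decrease, the shift identity yields, after reindexing,
$$g(f(x)) - g(x) = \frac{1}{3}\sum_{n\in\Z}\bigl(2^{-|n+1|} - 2^{-|n|}\bigr)\ell_n(x).$$
For $x\in X\setminus(A\cup A^*)$, let $m^*=m^*(x)$ be the largest integer with $x\in U_{m^*}$; this is a well-defined finite integer, and the nesting forces the profile $\ell_n(x)=0$ for $n\le m^*$, $\ell_n(x)=1$ for $n\ge m^*+2$, with a single intermediate value $\ell_{m^*+1}(x)\in[0,1)$. The main obstacle is that the weights $2^{-|n+1|}-2^{-|n|}$ change sign (negative for $n\ge 0$, positive for $n\le -2$, with a jump of $+1/2$ at $n=-1$), so the overall sign of the sum is not immediate. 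Plugging in the profile collapses the series to a closed form involving only $\ell_{m^*+1}(x)$ and a telescoped geometric tail. I would then split into the cases $m^*\ge 0$, $m^*\in\{-2,-1\}$, and $m^*\le -3$, and verify in each that the tail term $-2^{-|m^*+2|}$ coming from the region where $\ell_n\equiv 1$ dominates any positive contribution at $n=m^*+1$, yielding a strictly negative value and hence $g(f(x))<g(x)$.
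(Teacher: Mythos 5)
Your construction is correct, and it is genuinely different from the one in the paper. The paper starts from the single distance-ratio function $g_0(x)=d(x,A)/(d(x,A)+d(x,A^*))$, passes to $g_1=\sup_{n\ge0}g_0\circ f^n$ to get monotonicity along orbits (the continuity of this sup is the delicate point, handled there via the fundamental domain $N=cl(U)\setminus f(U)$), and then averages $g=\sum_{n\ge0}2^{-(n+1)}g_1\circ f^n$ to upgrade ``non-increasing'' to ``strictly decreasing.'' You instead attach a Urysohn function $\ell_n$ to each ring $U_{n-1}\setminus cl(U_n)$ of the nested family $U_n=f^n(U)$ and take the symmetrically weighted bilateral sum $\frac13\sum_{n\in\Z}2^{-|n|}\ell_n$. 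What your route buys is that continuity is completely painless: each $\ell_n$ is continuous by construction and the series converges uniformly, so no analogue of the paper's continuity lemma for $g_1$ is needed. The price is paid in the monotonicity step, where the coefficients $2^{-|n+1|}-2^{-|n|}$ change sign and you must do the profile/telescoping computation; I checked that it closes: your identifications $\bigcap_{n\in\Z}U_n=A$ and $\bigcup_{n\in\Z}U_n=X\setminus A^*$ are correct (the latter uses $cl(U)\subset f^{-1}(U)$), the profile of $(\ell_n(x))$ is exactly as you describe, the tail telescopes to $-2^{-|m^*+2|}$, and in each of your three cases one gets $3\bigl(g(f(x))-g(x)\bigr)=w_{m^*+1}\,\ell_{m^*+1}(x)-2^{-|m^*+2|}<0$ (indeed in the case $m^*\le -3$ the tail equals $-2\cdot 2^{-|m^*+1|}$, which dominates the positive term $2^{-|m^*+1|}\ell_{m^*+1}(x)$ outright, so strict negativity holds even without using $\ell_{m^*+1}(x)<1$). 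One small degenerate point worth a sentence in a final write-up: if $A=X$ (so $A^*=\emptyset$) or $A=\emptyset$, the distance-ratio formula for $\ell_0$ involves the distance to an empty set, and you should just take $g\equiv 0$ or $g\equiv 1$ there; the paper's $g_0$ has the same issue.
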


\begin{proof} Define $g_0: X\to[0,1]$ by
$$g_0(x)=\frac{d(x,A)}{d(x,A)+d(x,A^*)}.$$
Let $g_1(x)=\sup\{g_0 (f^n(x))\bigm| n\ge0\}$. Then $g_1: X\to[0,1]$ and 
$g_1(f(x))\le g_1(x)$ for all $x$. We must show $g_1$ is continuous. If $\lim 
x_i=x\in A$, then clearly $\lim g_1(x_i)=0$ so $g_1$ is continuous at points 
of $A$ and the same argument shows it is continuous at points of $A^*$. If $U$ 
is an open isolating neighborhood as above, let $N=cl(U) \setminus f(U)$. Let $x\in N$ 
and $r=\inf\{g_0(x)\bigm| x\in N\}$. Since $f^n(N)\subset f^n(cl(U))$ and 
$\bigcap_{n\ge0}f^n (cl(U))=A$, it follows that there is $n_0>0$ such that 
$g_0(f^n(N))\subset [0,r/2]$ whenever $n>n_0$. Hence for $x\in N$,
$$g_1(x)=\max\{g_0(f^n(x))\bigm| 0\le n\le n_0\}$$
so $g_1$ is continuous on $N$. Since $\bigcup^\infty_{n=-\infty} f^n(N)=X \setminus 
(A\cup A^*)$, $g_1$ is continuous. Finally, letting $$g(x)=\sum^\infty_{n=0} 
\frac{g_1(f^n(x))}{2^{n+1}}$$ we obtain a continuous function $g: X\to [0,1]$ 
such that $g^{-1}(0)=A$, $g^{-1}(1)=A^*$. Also 
$$g(f(x))-g(x)=\sum^\infty_{n=0} \frac{g_1(f^{n+1}(x)) - g_1(f^n(x))}{2^{n+1}}
$$
which is negative if $x\notin A\cup A^*$, since $g_1(f(y))\le g_1(y)$ for all 
$y$ and $g_1$ is not constant on the orbit of $x$.
\end{proof}

The following theorem is essentially a result of [C]. We have changed the 
setting from flows to homeomorphisms.

\begin{thm} [Fundamental Theorem of Dynamical Systems]
 If $f: X\to X$ is a homeomorphism of a compact metric 
space, then there is a complete Lyapunov function $g: X\to \rr$ for $f$.
\end{thm}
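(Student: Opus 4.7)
The plan is to stitch together the single-pair functions from Lemma~\ref{2.6}, one for each attractor/repeller pair, using the countability of attractors (Lemma~\ref{2.1}). The only subtlety will be choosing the weights so that all three defining properties of a complete Lyapunov function fall out cleanly; a base-$3$ (Cantor-type) weighting is the natural choice because, by Lemma~\ref{lem: dual}, each auxiliary function will take only the values $0$ and $1$ on $\R(f)$.

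Concretely, I would enumerate the attractors as $\{A_n\}_{n=1}^\infty$, apply Lemma~\ref{2.6} to each pair $(A_n,A_n^*)$ to obtain $g_n: X\to[0,1]$ with $g_n^{-1}(0)=A_n$, $g_n^{-1}(1)=A_n^*$, strictly decreasing on orbits off $A_n\cup A_n^*$, and then define
$$g(x)=\sum_{n=1}^\infty \frac{2\,g_n(x)}{3^n}.$$
Uniform convergence (dominated by $\sum 2/3^n = 1$) gives continuity. Property (1) is then essentially immediate: if $x\notin\R(f)$, then by Lemma~\ref{lem: dual} some $n$ has $x\notin A_n\cup A_n^*$, so $g_n$ strictly decreases along the orbit of $x$ while every other $g_k$ is non-increasing; summing yields $g(f(x))<g(x)$.

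The substantive work is property (2). Lemma~\ref{lem: dual} tells us that for $x\in\R(f)$ each $g_n(x)\in\{0,1\}$, so $g|_{\R(f)}$ is a ternary-type sum with digits $0$ and $2$. By Proposition~\ref{2.4}, $x\sim y$ iff $g_n(x)=g_n(y)$ for every $n$, which immediately gives $g(x)=g(y)$. For the converse I would mimic the standard injectivity proof for the Cantor map: take the smallest $n$ with $g_n(x)\ne g_n(y)$ (say $g_n(x)=0$, $g_n(y)=1$); the disagreement contributes $2/3^n$ to $g(y)-g(x)$, while all later terms together contribute at most $\sum_{k>n} 2/3^k = 1/3^n$ in the opposing direction, leaving a net gap of at least $1/3^n>0$. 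Property (3) then follows for free because $g(\R(f))$ is contained in the standard middle-thirds Cantor set, hence nowhere dense, and is compact as the continuous image of a compact set.

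The main obstacle I expect is precisely this separation estimate: a naive weighting such as $1/2^n$ would allow later terms to cancel the disagreement in the $n$-th slot, so everything hinges on the geometric buffer $\sum_{k>n} 2/3^k < 2/3^n$ provided by the base-$3$ weights together with the crucial input from Lemma~\ref{lem: dual} that each $g_n$ is already digit-valued on $\R(f)$. The remaining verifications (continuity, uniform convergence, monotonicity inheritance from the $g_n$) are routine bookkeeping.
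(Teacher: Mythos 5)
Your proposal is correct and follows essentially the same route as the paper: enumerate the attractors via Lemma~\ref{2.1}, apply Lemma~\ref{2.6} to each pair $(A_n,A_n^*)$, and sum with weights $2/3^n$ so that on $\R(f)$ the value of $g$ is a ternary expansion in the digits $0$ and $2$, from which properties (1)--(3) follow exactly as you describe. Your explicit separation estimate $2/3^n-\sum_{k>n}2/3^k=1/3^n$ just makes precise the digit-uniqueness argument the paper invokes implicitly.
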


\begin{proof} 
By Lemma~(\ref{2.1}) there are only countably many attractors
$\{A_n\}$ for $f$. By Lemma~(\ref{2.6}) we can find $g_n: X\to \rr$
with $g^{-1}_n(0)=A_n$, $g^{-1}_n(1)= A^*_n$ and $g_n$ strictly
decreasing on $X \setminus (A_n\cup A^*_n)$. Define $g: X\to \rr$
by $$g(x)=\sum^\infty_{n=1} \frac{2g_n(x)}{3^n}.$$ The series
converges uniformly so $g(x)$ is continuous. Clearly if
$x\notin\R(f)$, then there is an $A_i$ with $x\notin (A_i\cup A^*_i)$
so $g(f(x))<g(x)$.

Also, if $x\in\R(f)$, then $x\in(A_n\cup A^*_n)$ for every $n$, so $g_n(x)=0$ 
or 1 for all $n$. It follows that the ternary expansion of $g(x)$ can be 
written with only the digits 0 and 2, and hence $g(x)\in C$, the Cantor 
middle third set. Thus $g(\R(f))\subset C$ so $g(\R(f))$ is compact and 
nowhere dense. This proves (3) of the definition.

Finally, if $x,y\in\R(f)$ then $g(x)=g(y)$ if and only if , 
$g_n(x)=g_n(y)$ for all $n$. This is true since $2g_n(x)$ 
is the $n^{\text{th}}$ digit of the ternary 
expansion of $g(x)$  so $g(x)=g(y)$ implies $g_n(x)=g_n(y)$ for all $n$.
But $g_n(x)=g_n(y)$ for all $n$ if and only if  there is
no $n$ with $x\in A_n$, $y\in A^*_n$ or with $x\in A^*_n$, $y\in A_n$. Thus by 
Proposition~(\ref{2.4}), $g(x)=g(y)$ if and only if $x$ and $y$ are in the same chain transitive 
component.
\end{proof}

\end{document}